\theoremstyle{plain}
\newtheorem{thm}{Theorem}[section]
\newtheorem{prop}[thm]{Proposition}
\newtheorem{lem}[thm]{Lemma}
\theoremstyle{definition}
\newtheorem{defn}[thm]{Definition}
\newtheorem{exmp}[thm]{Example}
\newtheorem{remark}[thm]{Remark}
\newtheorem{ques}[thm]{Question}
\begin{document}

\keywords{knotted surfaces, diagrams, moves, twist-spun knots, marked vertex}
\subjclass[2010]{Primary 57Q45; Secondary 57M15.}

\title{On a surface singular braid monoid}

\author{Micha\l\ Jab\l onowski}
\address{Institute of Mathematics, Faculty of Mathematics, Physics and Informatics, University of Gda\'nsk, 80-308 Gda\'nsk, Poland.}
\email{michal.jablonowski@gmail.com}

\begin{abstract}

We introduce a monoid corresponding to knotted surfaces in four space, from its hyperbolic splitting represented by marked diagram in braid like form. It has four types of generators: two standard braid generators and two of singular type. Then we state relations on words that follow from topological Yoshikawa moves. As a direct application we will reprove some known theorem about twist-spun knots. We wish then to investigate an index associated to the closure of surface singular braid. Using our relations we will prove that there are exactly six types of knotted surfaces with the index less or equal to two, and there are infinitely many types of surface-knots with index equal to three. Towards the end we will construct a family of classical diagrams such that to unlink them requires at least four Reidemeister III moves.
\end{abstract}

\maketitle

\section{Introduction}
\ 

We introduce a monoid $SSB_m$ corresponding to surface-knots in four space, from its hyperbolic splitting represented by marked diagram in braid like form on $m$ strands. It has four types of generators: two standard $c_i$ and $c_i^{-1}$ braid generators and two noninvertible $a_i$ and $b_i$ of singular type. Then we state 11 relations on words that follow from topological Yoshikawa moves from his paper \cite{Yos94} and other interesting relations.

As a rather direct application we will give algebraic formulae for twist-spun knots and reprove some known theorem of Zeeman and Litherland. We wish then to investigate an index associated to the closure of surface singular braid. Our notion of the index is different from the one introduced by Viro and Kamada (cf. \cite{Kam92}). Using our relations we will prove that there are exactly six types of surface-knots with index less or equal to two, and there are infinitely many types of surface-knots with index equal to three (representing $2$-twist-spun $(2,k)$-torus knots). 

In the paper \cite{CESS06} there are given three pairs of diagrams of classical links such that deforming one of them to the other, requires minimum 2 (or 3 in other cases) Reidemeister III moves. We will give infinitely many diagrams $D$ of a trivial $2$-component link such that deforming $D$ into the trivial diagram with no crossing requires at least $4$ Reidemeister III moves.

\section{Basic definitions}
\

We will work in the smooth category, i.e we will be assuming that all manifolds and functions between them are smooth. An image of an embedding of a closed surface to $\mathbb{R}^4$ is called the \emph{surface-knot}. We will use a word: \emph{classical}, thinking about theory of embeddings of circles $S^1\sqcup\ldots\sqcup S^1\hookrightarrow \mathbb{R}^3$ modulo ambient isotopy in $\mathbb{R}^3$. Two surface-knots are \emph{equivalent} or have the same \emph{type}, if there exists an orientation preserving auto-homeomorphism of $\mathbb{R}^4$, taking one of those surfaces to the other. Without loss of generality we may assume that the image of projection $\pi(x_1, x_2, x_3, t)=(x_1, x_2, x_3)$ is in an \emph{general position}, i.e. the double point set of a surface consists of points whose neighborhood is locally homeomorphic to:
\begin{enumerate}[label={(\roman*)}]
\item two transversely intersecting sheets,
\item three transversely intersecting sheets,
\item the Whitney's umbrella.
\end{enumerate}
Points corresponding to cases (i), (ii), (iii) are called: a \emph{double point}, a \emph{triple point} and a \emph{branch point} respectively, of the projection.

Let $\mathbb{R}^3_t$ denote $\mathbb{R}^3\times\{t\}$ for $t\in\mathbb{R}$.  For a surface $K\subset\mathbb{R}^4$, the family $\{K\cap\mathbb{R}^3_t\}_{t\in\mathbb{R}}$ is called a \emph{motion picture} or simply a \emph{movie} for $K$. Moreover $K\cap\mathbb{R}^3_t$ is a \emph{still} of that movie. Every surface-knot gives us a movie, and from a specific finite number of stills we can recreate completely the type of the corresponding surface-knot.

\begin{prop}[{\cite[p. 12]{CarSai98}}]\label{p01}
In the generic projection of a movie of a surface-knot, a Reidemeister III move on stills corresponds to a triple point of the projection of the surface-knot to $\mathbb{R}^3$.
\end{prop}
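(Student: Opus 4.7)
The plan is a local analysis around a triple point. Near a triple point $p$ of the projection of $K$ to $\mathbb{R}^3$, the three transverse sheets of the image can be brought into normal form as the three coordinate planes of a small neighbourhood of $p$, by a smooth change of local coordinates. Lifting this normal form back to $K \subset \mathbb{R}^4$, each sheet becomes the graph of the suppressed coordinate over its coordinate plane, with three smooth graphing functions.

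With the normal form in hand, I would directly read off what the movie looks like near the critical instant corresponding to $p$. Slicing the three sheets by the relevant hyperplane yields three smooth arcs in each still, and direct inspection of the graphing functions shows that at non-critical times the three arcs are pairwise in general position, giving three transverse crossings that bound a small triangle in the diagram, whereas at the critical instant the three arcs pass through a single common point and the triangle collapses and re-opens on the opposite side. This is exactly the local picture of a Reidemeister III move on stills.

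For the converse, suppose a Reidemeister III move occurs at some instant of the movie. Then three strands of the diagram pass simultaneously through a common point. Lifting back through the generic projection of the still and then the slicing produces three local sheets of $K$ whose images under the projection share a single point of $\mathbb{R}^3$, and transversality of the three arcs in the diagram forces the three sheets to meet transversely, i.e. to form a triple point of the projection in the sense of case (ii) at the start of the section.

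The main obstacle I anticipate is purely bookkeeping between the two projections involved — the one collapsing $K$ to $\mathbb{R}^3$, and the further generic projection of each still to a diagram — and keeping track of which coordinate direction plays the role of the slicing ``time'' versus the out-of-diagram direction on each still. Once this is settled, both directions of the correspondence follow by inspection of the normal form and no substantive computation is required; the content of the proposition is really that the transversality condition defining a triple point is the same transversality condition forcing the local picture of an R3 move.
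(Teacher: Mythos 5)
First, a point of comparison: the paper does not prove this proposition at all --- it is imported from Carter and Saito's book with a page reference and used as a black box later (to convert triple-point lower bounds into Reidemeister III lower bounds). So your sketch is not an alternative to an argument in the paper; it is the only argument on the table, and its overall shape is the standard one: put the triple point into local normal form, slice by time, and read off the movie. Note also that the relevant projection to $\mathbb{R}^3$ here is the one whose target is (diagram plane)$\,\times\,$(time), i.e.\ a spatial coordinate of each still is suppressed --- not the map $\pi(x_1,x_2,x_3,t)=(x_1,x_2,x_3)$ defined earlier in the paper, which forgets time; your instinct that the bookkeeping between the two projections is where the care is needed is right.

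However, the normal form you chose does not produce the picture you claim, and the problem is exactly the ``bookkeeping'' you set aside. If the three sheets of the projected surface are straightened to the three coordinate planes of a neighbourhood of $p$ in $\mathbb{R}^3=\mathbb{R}^2\times\mathbb{R}_t$ and you then slice by levels of the coordinate $t$, one of the three planes is itself a level set: it lies entirely in the single still $t=0$ and meets no nearby still, so ``direct inspection'' gives two crossing arcs in nearby stills and an entire plane at one instant --- a degenerate movie, not three arcs bounding a collapsing triangle. You cannot simultaneously normalise the three sheets to coordinate planes and keep the slicing function a coordinate; the change of coordinates must be level-preserving in $t$. The correct local model --- and the place where the word \emph{generic} in the statement does real work rather than being decoration --- is three transverse planes through $p$ on each of which, and on each of whose three pairwise intersection lines, the time function is non-constant. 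Then each still near the critical level meets each sheet in one arc, the three arcs pairwise cross where the three double-point curves meet that still, the resulting triangle shrinks to a point at the critical instant and reopens on the opposite side: that is the Reidemeister III picture. Your converse direction is fine: three arcs of a still through one point give a point of $\mathbb{R}^3$ with three preimages on the surface, and genericity of the projection leaves a transverse triple point (case (ii)) as the only possibility. So the flaw is localised and repairable, but as written the forward direction computes the wrong movie, and the repair is precisely the genericity-of-the-time-direction hypothesis rather than pure bookkeeping.
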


More basic terminology and properties may be found in a book \cite{CarSai98}.

\subsection{Twist-spun knots}
\

One of the main families (besides ribbon surface) as objects of study, in surface-knot papers, is that of \emph{twist-spun knots} defined as follows.

\begin{defn}[\cite{Zee65}]
We think of $\mathbb{R}^4$ as an open book decomposition, that is a spun (in the fixed direction) of $\mathbb{R}^3_+$ about $\mathbb{R}^2$. For a classical knot $K$ we take its \emph{tangle} $T$ i.e. a properly embedded arc in $\mathbb{R}^3_+$, with distinct end points $a, b\in\partial\mathbb{R}^3_+$ such that $T\cup[a,b]\subset\mathbb{R}^3$ is a knot of the same type as $K$, where $[a,b]$ is an arc in $\mathbb{R}^2$ connecting $a$ and $b$. Then the geometrical trace of spinning of $T$ about $\mathbb{R}^2$ with additional twisting it in the meanwhile (in the fixed direction) $m$ times in a surrounding ball $B^3$ we call $m$-\emph{twist-spinning} of $K$ and denote it by $\tau^m(K)$.
\end{defn}

Throughout this paper, we do not consider an orientation of a surface-knot but that of ambient space $\mathbb{R}^4$. For a surface-knot $F$, we denote by $F^*$ the one as the mirror reflection of $F$.

\begin{thm}[Zeeman \cite{Zee65}, Litherland \cite{Lit85}]\label{t01}
\
For every classical knot $K$ and $m\in\mathbb{Z}$, we have the property that:

\begin{enumerate}[label={(\roman*)}]
\item $\tau^{-m}(K)=\tau^m(K)^*$.
\item $\tau^{m}(K^*)=\tau^m(K)$.
\end{enumerate}

\end{thm}

\subsection{Hyperbolic splitting and marked diagrams}
\ 

\begin{thm}[Lomonaco \cite{Lom81}, Kawauchi, Shibuya, and Suzuki \cite{KSS82}]

For any surface-knot $F$, there exists a surface-knot $F'$ satisfying the following:
\begin{enumerate}[label={(\roman*)}]
\item $F'$ is equivalent to $F$ and has only finitely many Morse's critical points.
\item All maximal points of $F'$ lie in $\mathbb{R}^3_1$.
\item All minimal points of $F'$ lie in $\mathbb{R}^3_{-1}$.
\item All saddle points of $F'$ lie in $\mathbb{R}^3_0$.
\end{enumerate}

\end{thm}

We call a representation $F'$ in the theorem a \emph{hyperbolic splitting} of $F$. The zero section $\mathbb{R}^3_0\cap F'$ of the hyperbolic splitting gives us a 4-valent graph (with possible loops without vertices). We assign to each vertex a \emph{marker} that informs us about one of the two possible types of saddle points (see Figure \ref{r6}). Making now a projection in general position of this graph to $\mathbb{R}^2$, and imposing crossing types like in the classical knot case, we receive a \emph{marked diagram} (terminology also applies to graphs of that kind which do not come from slicing closed surface).

\begin{figure}[ht]
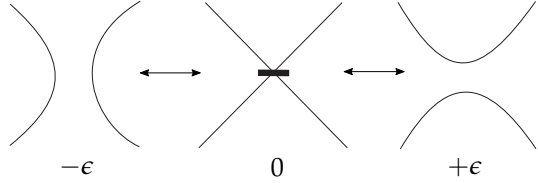

\begin{center}
\begin{lpic}[b(0.5cm)]{./PICTURES/m001(7cm)}
	\lbl[t]{15,-2;$-\epsilon$}
  \lbl[t]{60,-2;$0$}
  \lbl[t]{102,-2;$+\epsilon$}
	\end{lpic}
		\caption{Rules for smoothing a marker.\label{r6}}
\end{center}
\end{figure}

\begin{thm}[Kawauchi, Shibuya, and Suzuki \cite{KSS82}]\label{t04}

Let $F_i$ $(i=1,2)$ be a surface-knot in a hyperbolic splitting, and $D_i$ the marked diagram associeted with the cross-section $F_i\cap\mathbb{R}^3_0$. If $D_1=D_2$, then $F_1$ is equivalent to $F_2$.

\end{thm}

For a marked diagram $D$, we denote by $L_+(D)$ and $L_-(D)$ the diagrams obtained from $D$ by smoothing every vertex as shown in Figure \ref{r6} for $+\epsilon$ and $-\epsilon$, respectively. We have the following characterization of marked diagrams corresponding to surface-knots.

\begin{thm}[Kawauchi, Shibuya, and Suzuki \cite{KSS82}]\label{t05}

Let $D$ be a marked diagram. There exists a surface-knot $F$ in a hyperbolic splitting such that $D$ is associated with the cross-section $F\cap\mathbb{R}^3_0$ if and only if $L_+(D)$ and $L_-(D)$ are diagrams of trivial links in $\mathbb{R}^3$.

\end{thm}

\begin{thm}[Swenton \cite{Swe01}](question asked by Yoshikawa in \cite{Yos94})\ \\

Two surface-knots are equivalent if and only if, their marked diagrams may be transformed one to another by isotopy in $\mathbb{R}^2$ and finite sequence of elementary local moves $Y_1, \ldots,Y_8$ taken from the list in Figure \ref{r1} (their mirror moves and moves having all its markers (in that fragment of a diagram) switched to its second type).
\end{thm}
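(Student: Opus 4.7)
The plan is to prove the two implications separately, with the interesting direction being the ``only if'' part. For the ``if'' direction, I would go through the moves $Y_1,\ldots,Y_8$ one by one and exhibit, for each local move on a marked diagram $D \leadsto D'$, an explicit ambient isotopy of $\mathbb{R}^4$ between the knotted surfaces reconstructed from $D$ and from $D'$. The classical Reidemeister-type moves among $Y_1,\ldots,Y_8$ lift to isotopies supported in a thin slab around $\mathbb{R}^3_0$ (essentially the Reidemeister theorem applied level-wise to $L_+(D)$ and $L_-(D)$, using Theorem~\ref{t05}), while the moves involving markers correspond to local modifications of the saddle bands of the hyperbolic splitting; here the verification is a direct check, drawing the before/after surfaces.

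For the ``only if'' direction, the plan is a Cerf-theoretic argument. Given two equivalent knotted surfaces $F_0, F_1 \subset \mathbb{R}^4$ in hyperbolic splitting, I would first use Lomonaco's theorem to choose an ambient isotopy $\{\Phi_s\}_{s\in[0,1]}$ with $\Phi_0 = \mathrm{id}$ and $\Phi_1(F_0) = F_1$, and then perturb it so that for each $s$ the surface $F_s := \Phi_s(F_0)$ is in general position with respect to the height function $t$ except at finitely many parameter values $s_1 < \cdots < s_N$. The generic codimension-one failures are of two kinds: (a) a Morse-type birth/death or handle slide of critical points of $t|_{F_s}$, and (b) a moment where two critical points share the same $t$-value so the decomposition into $\mathbb{R}^3_{-1},\mathbb{R}^3_0,\mathbb{R}^3_1$ is briefly violated. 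After each such event I would restore the hyperbolic splitting by an isotopy rearranging the critical points onto the three prescribed levels.

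The key step is then to translate each generic transition into a move on the marked diagram. Between consecutive singular parameters the resolutions $L_+(F_s)$ and $L_-(F_s)$ change only by classical isotopy, which by the Reidemeister theorem yields a finite sequence of moves $Y_1,Y_2,Y_3$. At a birth/death of a pair of saddles, or an exchange of heights between a saddle and a max/min, or two saddles passing each other, the local model is four-dimensional and, after projection, produces exactly one of the marker-involving moves $Y_4,\ldots,Y_8$ (or their mirrors). Concatenating these local sequences gives the required word of Yoshikawa moves from the marked diagram of $F_0$ to that of $F_1$.

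The main obstacle I expect is the bookkeeping in the Cerf step: one must enumerate all generic codimension-one degenerations of a marked surface in $\mathbb{R}^4$ and confirm the list is complete, and then check that each such degeneration really is realized by one of $Y_4,\ldots,Y_8$ rather than requiring a genuinely new move. A subtle but unavoidable subcase is when a Reidemeister III happens with a marker participating in the triple point, whose resolution using Proposition~\ref{p01} ties the combinatorial move to an actual triple point of the surface in $\mathbb{R}^3$; verifying that both possible marker orientations are covered by a combination of $Y_7,Y_8$ and their variants is where I expect the argument to be most delicate.
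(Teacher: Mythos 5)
The paper does not prove this statement: it is quoted as a theorem of Swenton, the ``only if'' direction being precisely the question Yoshikawa left open in \cite{Yos94}, so your proposal has to stand on its own. As it stands it is a plan rather than a proof. The Cerf-theoretic outline is the right framework (it is essentially the strategy of Swenton's paper, and of the later complete treatment by Kearton and Kurlin), but the entire mathematical content of the theorem is concentrated in the step you defer: enumerating \emph{all} generic codimension-one degenerations of a one-parameter family of hyperbolically split surfaces --- births/deaths and reorderings of critical points of the height function together with the generic singularities of the induced family of middle-level diagrams in $\mathbb{R}^2$ --- proving that this list is complete, and then realizing each item by a composition of $Y_1,\ldots,Y_8$ and their mirror/marker-switched variants. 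A generic Cerf argument only yields that \emph{some} finite list of local moves generates the equivalence; identifying that list with Yoshikawa's eight specific moves is exactly the hard part, and your text acknowledges it only as ``the main obstacle I expect.'' That is not a routine fillable gap; it is the theorem.

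Two further points would need repair even at the level of the outline. First, in the ``if'' direction, reconstructing a surface from a marked diagram (Theorem \ref{t04}) requires knowing that the trivial links $L_+(D)$ and $L_-(D)$ bound systems of disks above $\mathbb{R}^3_1$ and below $\mathbb{R}^3_{-1}$ that are unique up to isotopy; without that uniqueness, checking that a marker-involving move preserves the surface type is not a ``direct check'' of before/after pictures. Second, in the ``only if'' direction you conflate isotopy of the resolutions $L_\pm(F_s)$ with isotopy of the middle cross-section itself: an ambient isotopy of $\mathbb{R}^4$ does not preserve the level $\mathbb{R}^3_0$, so one must track how the 4-valent marked graph in the middle level evolves, and the moves that slide a marked vertex past a strand or a crossing arise from this planar-projection genericity, not from births/deaths of saddles as your sorting suggests. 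Proposition \ref{p01} concerns triple points of projections of movies and is not the relevant tool for the Reidemeister III subcase with a participating marker.
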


\begin{figure}[ht]
\begin{center}
\includegraphics[width=.75\textwidth]{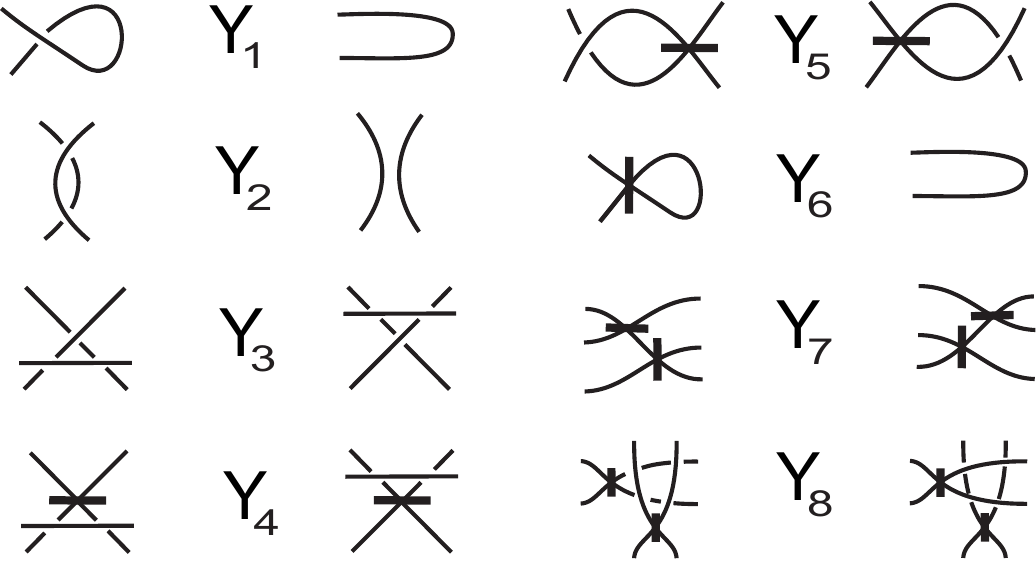}
\caption{\label{r1} Yoshikawa moves}
\end{center}
\end{figure}

\section{Monoid of surface-knots}

\begin{defn}

We say that a marked diagram is in \emph{braid form} if by forgetting about marker types, it is the geometric closure of some singular braid (notion that was developed independently in \cite{Bae92} and \cite{Bir93}) of degree $m$ for some $m\in\mathbb{Z}_+$ (i.e. having $m$ numbered strands).

\end{defn}

\begin{prop}
\ \\
For every surface-knot there exists its marked diagram in braid form.

\end{prop}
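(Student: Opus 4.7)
The plan is to adapt Alexander's braiding theorem to the setting of marked diagrams, treating both classical crossings and marked vertices uniformly as 4-valent graph vertices. By Theorem \ref{t04} the given knotted surface admits at least one marked diagram $D$; the goal is to exhibit an ambient isotopy of $\mathbb{R}^2$ carrying $D$ to a diagram whose underlying 4-valent graph (with crossing information retained) is the geometric closure of a singular braid. Such an isotopy preserves crossing types and markers, so the resulting marked diagram still represents the original knotted surface.

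First, I would fix an orientation on each component of the underlying 4-valent graph of $D$; at every vertex (whether a crossing or a marked saddle) the two smooth strands simply carry their orientations through, so this is well defined. Next, choose a generic basepoint $p\in\mathbb{R}^2\setminus D$ and use polar coordinates $(r,\theta)$ around $p$. Call an arc \emph{Alexander-positive} if $\theta$ is strictly increasing along its orientation, and \emph{Alexander-negative} otherwise; the diagram is in braid form with braid axis $p$ exactly when every arc is Alexander-positive.

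I would then apply Alexander's flipping trick: each Alexander-negative subarc can be excised and replaced, via a planar isotopy supported away from $p$, by a polygonal arc that traverses the angular coordinate in the positive direction, at the cost of introducing new classical crossings where the flipped piece passes over or under other strands of $D$. Since crossings and marked vertices form a finite set of isolated points, one can always choose the endpoints of each flipped subarc to lie on a single smooth portion of the graph, so that no vertex is ever split, created, or relabelled during the procedure. Each flip strictly reduces the number of Alexander-negative arcs (or leaves it unchanged while reducing a suitable complexity), so the process terminates in finitely many steps with a diagram in braid form.

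The step I expect to be the main obstacle is verifying that the flipping trick is genuinely compatible with the marker data; but since the only new features produced by a flip are classical over/under crossings between pairs of smooth strands, and since planar isotopy manifestly preserves markers, the argument reduces to the elementary observation that the flipped subarcs can be chosen to avoid the finite set of marked vertices. Thus the classical Alexander argument (and its singular-braid extension due to Birman \cite{Bir93} and Baez \cite{Bae92}) carries over verbatim, yielding the desired marked diagram in braid form.
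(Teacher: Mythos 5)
Your argument is essentially the paper's own proof, spelled out in more detail: the paper likewise forgets the marker types, treats the marked vertices as singular points, invokes the Alexander-type braiding theorem for singular braids from Birman's paper (noting that only moves $Y_1,\ldots,Y_5$, i.e.\ planar isotopy and Reidemeister-type moves away from the vertices, are needed), and then restores the markers. Your explicit verification that the flipped subarcs can avoid the finite set of marked vertices is exactly the observation the paper leaves implicit, so the proposal is correct and takes the same route.
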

\begin{proof}
Forgetting for a moment about markers and leaving singular points at its place, we apply the Alexander's like theorem for singular braids from \cite{Bir93}. Moreover we deduce that it is only required to use moves of type $Y_1, Y_2, \ldots, Y_5$ to do a braid form. Putting now markers back in appropriate manner to vertices, we receive a braid form of marked diagram.

\end{proof}

We now introduce monoid $SSB_m$ consisting of marked diagrams in braid form on $m$ strands. Elements of that monoid, called \emph{singular surface braids} are generated by four types of elements $a_i, b_i, c_i, c_i^{-1}$ for $i=1, \ldots, m-1$, where the correspondence of types of crossings and types of markers between $i$-th and $i+1$-th strands (in the horizontal position) is presented in Figure \ref{r5} (remaining strands in braid are straight lines without crossings).

\begin{figure}[ht]
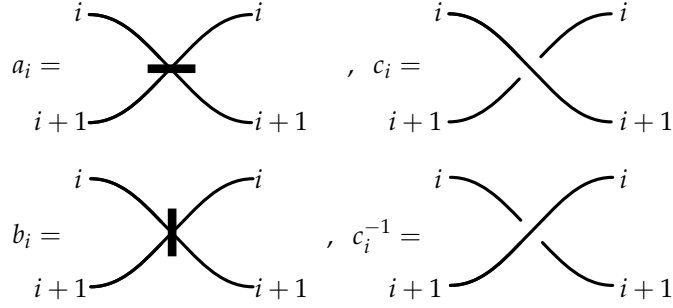

\begin{center}
\begin{lpic}[]{./PICTURES/pic05(7cm)}
	\lbl[r]{-5,42;$a_i=$}
	\lbl[r]{-5,10;$b_i=$}
	\lbl[r]{64,42;$,\;\;c_i=$}
	\lbl[r]{64,10;$,\;\;c_i^{-1}=$}
	\lbl[r]{-1,53;$i$}
	\lbl[r]{-1,21;$i$}
	\lbl[r]{0,32;$i+1$}
	\lbl[r]{0,0;$i+1$}
	\lbl[r]{68,53;$i$}
	\lbl[r]{68,21;$i$}
	\lbl[r]{68,32;$i+1$}
	\lbl[r]{68,0;$i+1$}
	\lbl[l]{32,53;$i$}
	\lbl[l]{32,21;$i$}
	\lbl[l]{32,32;$i+1$}
	\lbl[l]{32,0;$i+1$}
	\lbl[l]{102,53;$i$}
	\lbl[l]{102,21;$i$}
	\lbl[l]{102,32;$i+1$}
	\lbl[l]{102,0;$i+1$}
	\end{lpic}
		\caption{Correspondence of monoid generators.\label{r5}}
\end{center}
\end{figure}

We will indicate our closure of a marked diagram in braid form by adding brackets [] and sometimes adding lower index to it, saying how many strands we are joining.

\begin{exmp}

We have two types of trivially knotted projective planes $[c_1a_1]$ and $[c_1^{-1}a_1]$. Standard torus $\mathbb{T}^2$ can be presented as $[b_1a_1]$.
\end{exmp}

Let the symbol $\Delta_s$ mean (known from braid theory) the positive half-twist in $\mathbb{R}^3$ of $s$ strands involved in the equation we are concerning (it is clear from words indices). It contains only product of generators of type $c_i$.

\begin{defn}\label{d01}
Let $m\in \mathbb{Z}_+$ and $i,k,n\in\{1, \ldots, m-1\}$ such that $|k-i|=1$, moreover let $x_i,y_i\in\{a_i,b_i,c_i, c_i^{-1}\}$.
In monoid $SSB_m$ we introduce following relations.

\begin{enumerate}
\item[A1)] $c_ic_i^{-1}=1$
\item[A2)] $x_iy_n=y_nx_i$ for $n\not=k$
\item[A3)] $x_ic_kc_i=c_kc_ix_k$
\item[A4)] $x_ic_k^{-1}c_i^{-1}=c_k^{-1}c_i^{-1}x_k$
\item[A5)] $a_ib_k=b_ka_i$
\item[A6)] $a_ib_{i-2}(c_{i-1}c_{i-2}c_ic_{i-1})^2=a_ib_{i-2}$ for $i>2$
\item[A7)] $b_ia_{i-2}(c_{i-1}c_{i-2}c_ic_{i-1})^2=b_ia_{i-2}$ for $i>2$
\item[A8)] $a_i^2=a_i$
\item[A9)] $b_i^2=b_i$
\item[A10)] $a_ib_ic_i^2=a_ib_i$
\item[A11)] $a_ib_k\Delta_3=a_ib_k\Delta_3^{-1}$
\end{enumerate}

\end{defn}

Let us denote by $CSB_m$ a subset of $SSB_m$ containing only those elements $x$, that $L_+([x])$ and $L_-([x])$ are diagrams of trivial classical links.

\begin{defn}
We define moreover following Markov type relations (where $n\in \mathbb{Z}_+$, $x_i\in\{a_i,b_i,c_i, c_i^{-1}\}$).

\begin{enumerate}
\item[C1)] $\left[x_iS_n\right]_n=\left[S_nx_i\right]_n$ for $i<n$ and $x_iS_n\in CSB_n$
\item[C2)] $\left[S_n\right]_n=\left[S_nx_n\right]_{n+1}$ for $S_n\in CSB_n$
\end{enumerate}
\end{defn}

\begin{thm}

Making change in algebraic formulation of a surface-knot by using one of relations A1)-A11) or C1)-C2) on words, we receive a formula of equivalent surface-knot.

\end{thm}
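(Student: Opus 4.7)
The plan is to appeal to Swenton's theorem quoted above: two marked diagrams present equivalent knotted surfaces if and only if they differ by a finite sequence of Yoshikawa moves $Y_1,\ldots,Y_8$ (together with their mirror and marker-switched variants) and planar isotopy. Since the generators $a_i,b_i,c_i^{\pm 1}$ of $SSB_m$ are local two-strand patches of a braided marked diagram, for each monoid relation A1)--A12) and each closure relation C1)--C2) it suffices to produce such a finite sequence of Yoshikawa moves on the corresponding local patch, carrying the left-hand side to the right-hand side while leaving the rest of the braid untouched. The proof therefore reduces to a finite diagrammatic checklist.

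First I would dispatch the braid-like relations. A1) is the Reidemeister II move $Y_2$; A2) and A6) involve generators supported on disjoint pairs of strands and reduce to planar isotopy. A3)--A5) encode the braid relation $c_ic_kc_i=c_kc_ic_k$ (move $Y_3$) together with a marker or a second crossing sliding across the braided triangle; the marker slides are instances of $Y_4,Y_5$, the rest are planar isotopies. Next I would match each of the marker-specific relations A9)--A12) with one of the remaining Yoshikawa moves by drawing the corresponding local patch: A9) and A10) encode the collapse of two like markers into one (a $Y_6$-type move); A11) encodes the cancellation of a squared crossing framed by opposite markers (a $Y_7$-type move); and A12) expresses the $Y_8$ identity equating the two half-twisted configurations of a $+$ and a $-$ marker on three strands. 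Relations A7) and A8) are long-distance variants of the latter, and can be derived formally from A3)--A6) and A12) using the usual braid calculus (cf.\ the presentation of the singular braid monoid in \cite{Bir93}), or alternatively verified directly by a picture of a full twist pulled across a far-away marker.

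For the closure relations I would argue separately. C1) is the classical conjugation move: under the closure operation, cyclic rearrangement of the letters of a braid word is a planar rotation of the closed marked diagram, hence a planar isotopy. C2) is Markov-type stabilization: when $x_n=c_n^{\pm 1}$ it is realized by a single Reidemeister I move ($Y_1$); when $x_n\in\{a_n,b_n\}$ the extra strand together with the closure arc bounds a trivially embedded disk in $\mathbb{R}^4$, and the resulting marked diagram may be reduced back to that of $S_n$ using $Y_1$ and the marker slides $Y_4,Y_5$. The hypotheses $S_n\in CSB_n$ and $x_iS_n\in CSB_n$ are needed, via Theorem \ref{t05}, to guarantee that all intermediate closures present bona fide knotted surfaces, so that Swenton's theorem is in force throughout.

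The main obstacle I anticipate is in A11), A12), and the marker case of C2): here the local patch involves both marker types together with crossings, and one must track marker signs (the $\pm\varepsilon$ convention of Figure \ref{r6}) and over/under crossing data simultaneously, then verify that the picture obtained by reading off the algebraic word agrees with the appropriate mirror or marker-switched variant of $Y_7$, $Y_8$ in \emph{every} combinatorial case. Once these few pictures are drawn unambiguously, the remaining checks are mechanical and the theorem follows by assembling the local Yoshikawa sequences into a global one.
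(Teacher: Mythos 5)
Your proposal is correct and is essentially the paper's own argument: the author simply asserts that relations A1)--A11) and C1)--C2) ``were deduced from topological local moves'' of Yoshikawa and supplies an explicit diagrammatic verification only for A12), which is exactly the finite checklist you describe. The one detail worth adjusting is that A12) does not appear to be a single Yoshikawa move --- the paper proves it by an explicit sequence of moves (Figure \ref{r0014}), while A7) and A8), with their braid word $(c_{i-1}c_{i-2}c_ic_{i-1})^2$ on four strands, are the relations that most directly transcribe the $Y_8$-type move --- so your suggestion of deriving A7)/A8) from A12) reverses the paper's logical order, though your fallback of checking them directly by a picture is what the paper in effect does.
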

\begin{proof}
The algebraic relations A1)-A10), C1)-C2) were deduced from topological local moves preserving the type of surface-knot from paper \cite{Yos94}. The proof of relation A11) for $k=i+1$ is given as in Figure \ref{r0014} (for $k=i-1$ it may be done by analogy, changing marker types and using relation A5)).

\begin{figure}[ht]
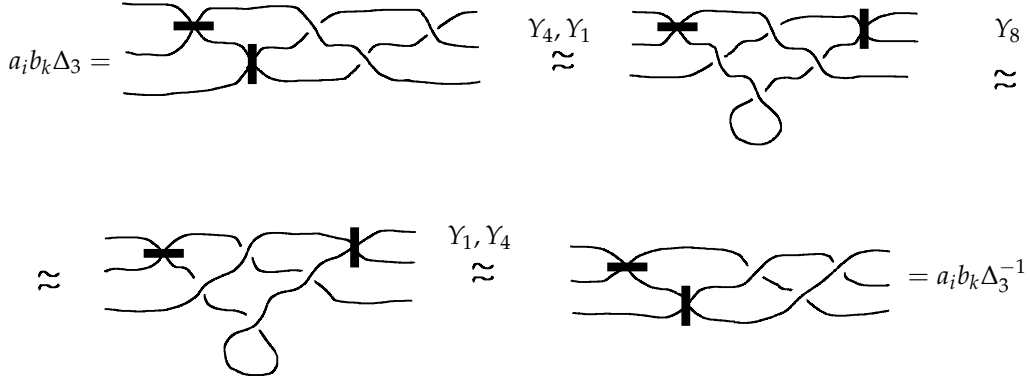

\begin{center}
\begin{lpic}[b(1cm)]{./PICTURES/MJ_57(13cm)}
	\lbl[r]{11,47;$a_ib_k\Delta_3=$}
	\lbl[b]{78,50;$Y_4, Y_1$}
	\lbl[b]{145,50;$Y_8$}
	\lbl[b]{66,19;$Y_1, Y_4$}
	\lbl[l]{130,15;$=a_ib_k\Delta_3^{-1}$}
	\end{lpic}
	\caption{A11) move\label{r0014}}
\end{center}
\end{figure}

\end{proof}

\begin{remark}
It is still an open problem, whether any pair of marked diagrams in braid form of equivalent surface-knot, can be transformed one another by using only relations A1)-A11) and C1)-C2).
\end{remark}

\begin{lem} 
\
Under the assumptions of Definition \ref{d01} the following relation holds:
\begin{enumerate}
\item[A12)] $x_i\Delta_n=\Delta_n x_{n-i}$ for $1\leq i<n\leq m$.
\end{enumerate}
\end{lem}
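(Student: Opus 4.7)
The plan is induction on $n$. For the base case $n=2$, the claim reduces to $x_1 c_1 = c_1 x_1$, which is the instance of A2) with $i = n = 1$ and $k = 2$ (so $n \neq k$ and $|k-i| = 1$).

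For the inductive step, I would rely on the two Garside-type decompositions of the half-twist,
\[
\Delta_n \;=\; \Delta_{n-1}\cdot(c_{n-1}c_{n-2}\cdots c_1) \;=\; (c_1c_2\cdots c_{n-1})\cdot\Delta_{n-1},
\]
where in both expressions $\Delta_{n-1}$ denotes the half-twist on strands $1,\ldots,n-1$. These identities involve only $c_i$'s and are consequences of far-commutation A2) together with the braid relation (the special case $x_i = c_i$ of A4)), provable by an auxiliary induction on $n$. Granted these, I would split on the value of $i$. For $1 \leq i \leq n-2$, the first decomposition together with the inductive hypothesis gives $x_i \Delta_n = \Delta_{n-1}\, x_{n-1-i}\,(c_{n-1}\cdots c_1)$; setting $j = n-1-i$, I would push $x_j$ rightward through $c_{n-1}\cdots c_1$ in three steps: first commute $x_j$ past the prefix $c_{n-1}\cdots c_{j+2}$ using A2) (each index at distance $\geq 2$ from $j$), then apply A4) with pair $(j, j+1)$ to rewrite $x_j\, c_{j+1} c_j$ as $c_{j+1} c_j\, x_{j+1}$, and finally commute $x_{j+1}$ past the suffix $c_{j-1}\cdots c_1$ again by A2). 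This yields $(c_{n-1}\cdots c_1)\, x_{j+1}$ with $j+1 = n-i$, whence $x_i\Delta_n = \Delta_n x_{n-i}$. The remaining case $i = n-1$ is handled symmetrically via the second decomposition: $x_{n-1}$ commutes past $c_1,\ldots,c_{n-3}$ (by A2)), then A4) with pair $(n-1, n-2)$ turns $x_{n-1}\,c_{n-2}c_{n-1}$ into $c_{n-2} c_{n-1}\, x_{n-2}$, and the inductive hypothesis $x_{n-2}\Delta_{n-1} = \Delta_{n-1}\, x_1$ closes the argument.

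The main obstacle will be the careful bookkeeping of indices at the boundary cases $i=1$ and $i=n-2$, where one of the two commuting blocks in the push step is empty; the derivation then shortens but remains valid once one adopts the convention that an empty product equals $1$. A secondary task is to verify the two Garside-type decompositions of $\Delta_n$ from the monoid presentation, but this is a classical fact and follows by a routine inner induction using only A2) and A4). Apart from these, the proof is mechanical, with A4) serving as the crucial index-shifting relation that enacts the conjugation $(c_{j+1}c_j)\,x_{j+1}\,(c_{j+1}c_j)^{-1} = x_j$ needed to turn $x_j$ into $x_{j+1}$ each time one extra crossing is prepended.
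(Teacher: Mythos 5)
Your proof is correct, but it takes a genuinely different route from the paper. The paper disposes of A13) in one line by a geometric observation: the positive half-twist of $n$ strands reverses their order, so a generator sitting between strands $i$ and $i+1$ emerges between strands $n-i$ and $n-i+1$ after being pushed through $\Delta_n$. Your argument instead derives A13) purely algebraically from the presentation, by induction on $n$ using the Garside decompositions $\Delta_n=\Delta_{n-1}(c_{n-1}\cdots c_1)=(c_1\cdots c_{n-1})\Delta_{n-1}$, far commutation A2), and the index-shifting relation A4); the bookkeeping you describe (commute past the far block, apply $x_j c_{j+1}c_j=c_{j+1}c_j x_{j+1}$, commute past the remaining block) checks out, including the boundary cases, and the base case $x_1c_1=c_1x_1$ is indeed the $n=i$ instance of A2). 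What your version buys is non-trivial: it exhibits A13) as a formal consequence of the relations A2) and A4) alone, so it holds in the abstract monoid $SSB_m$ and not merely in its geometric interpretation --- a point the paper's one-line proof leaves open. What it costs is length, plus the auxiliary verification of the two decompositions of $\Delta_n$ (standard, but strictly speaking another inner induction). Either proof is acceptable; yours is the more self-contained from the axiomatic standpoint.
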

\begin{proof}

It follows directly from geometric observation after making positive half-twist in $\mathbb{R}^3$ of first $n$ strands with element $x_i$.

\end{proof}

\begin{prop}[J. H. Przytycki] 
\
Under the assumptions of Definition \ref{d01} the following relation holds:
\begin{enumerate}
\item[A13)] $a_ic_k^{-1}b_ic_k\Delta_3^2=a_ic_k^{-1}b_ic_k$
\end{enumerate}
\end{prop}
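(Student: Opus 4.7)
The plan is to derive A14) from A12) by a short conjugation argument. First I would use A3) to rewrite $c_k^{-1}b_ic_k = c_ib_kc_i^{-1}$, so the left-hand side of A14) becomes $a_ic_ib_kc_i^{-1}\Delta_3^2$. This is the key trick: it exchanges the awkward $b_i$ for a $b_k$, putting the word into the index pattern to which A12) is tailored, at the cost of an extra conjugating pair $c_i, c_i^{-1}$.

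Next I would observe that $\Delta_3^2$ is central in $SSB_3$: applying A13) twice gives $x_j\Delta_3^2 = \Delta_3 x_{3-j}\Delta_3 = \Delta_3^2 x_j$ for any generator $x$ and any $j\in\{1,2\}$, since $j\mapsto 3-j$ is an involution. Using this, I slide $\Delta_3^2$ past $c_i^{-1}$ and obtain $a_ic_ib_k\Delta_3^2 c_i^{-1}$. Then relation A2) applied with $n=i$ (permissible because the hypothesis $|k-i|=1$ forces $i\ne k$) gives $a_ic_i = c_ia_i$, so the expression becomes $c_i(a_ib_k\Delta_3^2)c_i^{-1}$; now A12) strips the inner $\Delta_3^2$ and leaves $c_ia_ib_kc_i^{-1}$. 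Finally, reversing the two cosmetic moves — swap $c_ia_i$ back to $a_ic_i$ by A2), then apply A3) backwards to fold $c_ib_kc_i^{-1}$ into $c_k^{-1}b_ic_k$ — delivers the right-hand side $a_ic_k^{-1}b_ic_k$.

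There is no real computational obstacle here: the entire proof is a short chain of substitutions. The two points worth pausing on are (i) the centrality of $\Delta_3^2$, which is a one-line consequence of A13) and is essential for relocating the full twist, and (ii) the reading of A2) at the same-index case $n=i$, which yields the standard singular-braid commutation $a_ic_i=c_ia_i$ and is permitted by the stated condition $n\ne k$. Once these two observations are in hand, A14) is essentially a one-sided conjugate of A12) by $c_i$.
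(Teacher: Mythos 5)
Your proof is correct, and it rests on the same two pillars as the paper's own argument: the centrality of $\Delta_3^2$ (obtained from A13)) and relation A12) as the engine that absorbs the full twist. The execution, however, is genuinely cleaner. The paper expands $\Delta_3^2$ as $(c_ic_kc_i)^2$ and grinds through a letter-by-letter shuffle using A1), A2), A4) and A5) before and after invoking A12); you instead read A3) as the single conjugation identity $c_k^{-1}b_ic_k=c_ib_kc_i^{-1}$, which, after one commutation $a_ic_i=c_ia_i$ and one application of centrality, exhibits the whole left-hand side as $c_i\,(a_ib_k\Delta_3^2)\,c_i^{-1}$. From there A12) finishes the job and the two cosmetic moves reverse. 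What your route buys is transparency: A14) is literally the $c_i$-conjugate of the squared form of A12), a structural fact that the paper's computation obscures. The only step you leave implicit --- that $a_ib_k\Delta_3=a_ib_k\Delta_3^{-1}$ entails $a_ib_k\Delta_3^2=a_ib_k$, by right-multiplying by $\Delta_3$ and cancelling with A1) --- is a one-line consequence and should be recorded in a final write-up.
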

\begin{proof}
$a_ic_k^{-1}b_ic_k\Delta_3^2\overset{\text{A12)}}{=}a_ic_k^{-1}\Delta_3^2b_ic_k=a_ic_k^{-1}(c_ic_kc_i)^2b_ic_k\overset{\text{A3)}}{=}a_ic_k^{-1}c_kc_ic_kc_ic_kc_ib_ic_k\overset{\text{A1), A2)}}{=}a_ic_ic_kc_ic_kb_ic_ic_k\overset{\text{A2), A3)}}{=}c_ia_i\Delta_3b_ic_ic_k\overset{\text{A12)}}{=}c_ia_ib_k\Delta_3c_ic_k\overset{\text{A11)}}{=}c_ia_ib_k(\Delta_3)^{-1}c_ic_k\overset{\text{A2)}}{=}a_ic_ic_i^{-1}c_k^{-1}c_i^{-1}c_ib_ic_k\overset{\text{A1)}}{=}a_ic_k^{-1}b_ic_k$

\end{proof}

Using the diagram given in Montesinos' paper \cite{Mon86}, we can write down in terms of our monoid, the formula for every twist-spun knots as follows.

\begin{prop}\label{p04}
\ \\
Let a classical knot $\widehat{K}$ be the plat closure of the braid $K$ on $2m+1$ strands, then $$\tau^n(\widehat{K})=\left[\left(\prod_{i=1}^{m}a_{2i}\right)K\left(\prod_{i=1}^{m}b_{2i}\right)K^{-1}\Delta_{2m+1}^{2n}\right].$$
\end{prop}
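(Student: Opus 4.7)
The plan is to interpret Montesinos' explicit marked-diagram picture of $\tau^n(\widehat{K})$ from \cite{Mon86} directly in the language of our monoid $SSB_{2m+1}$. I would start from the plat representation of $\widehat{K}$ using the braid $K$ on $2m+1$ strands, in which strand~$1$ serves as the axis of the spinning operation and the $m$ remaining pairs $(2,3),(4,5),\ldots,(2m,2m+1)$ play the role of the plat arcs at the top and bottom of~$\widehat{K}$.

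With this setup Montesinos' picture of the $n$-twist spun surface splits, read along the braid axis, into five recognisable layers. At the lower row of saddles one sees $m$ pairwise disjoint markers joining strands $(2i,2i+1)$ for $i=1,\ldots,m$; by relation A2) these commuting factors translate to $\prod_{i=1}^{m} a_{2i}$. Immediately above sits the braid $K$, written as a word in $c_j^{\pm 1}$. Then comes a second row of $m$ saddles of the opposite type, joining the same pairs and hence encoded by $\prod_{i=1}^{m} b_{2i}$. Above this one meets the copy of the braid coming from the opposite side of the spinning operation, which in braid form is precisely $K^{-1}$. Finally, $n$ full twists of all $2m+1$ strands around the axis are recorded by $\Delta_{2m+1}^{2n}$, since one full twist equals $\Delta^{2}$. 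Taking the braid closure $[\,\cdot\,]$ reconnects the top and bottom of this word, producing the marked diagram of $\tau^n(\widehat{K})$, and the formula then follows by Theorem~\ref{t04}.

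The main obstacle is the geometric verification that Montesinos' picture can indeed be arranged into a braid-form marked diagram with exactly this layered structure, with the $a$- and $b$-markers located at the even positions and separated by the braids $K$ and $K^{-1}$. This step relies on the Alexander-type theorem for singular braids (the same ingredient used earlier to show that every knotted surface admits a marked diagram in braid form, so that only Yoshikawa moves $Y_1,\ldots,Y_5$ are needed), together with an explicit check that strand~$1$ can be isotoped onto the twisting axis without disturbing the remaining layers. Once this correspondence between the geometric layers and the algebraic factors is pinned down, no further calculation inside the monoid is required.
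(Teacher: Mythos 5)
Your proposal follows the same route as the paper: the paper offers no written proof of Proposition~\ref{p04} beyond the single remark that the formula is read off from the diagram in Montesinos' paper \cite{Mon86}, which is exactly the translation you carry out. Your layer-by-layer identification of the saddles with the $a_{2i}$ and $b_{2i}$ factors, the two braid layers with $K$ and $K^{-1}$, and the $n$ full twists with $\Delta_{2m+1}^{2n}$ is a faithful (and more explicit) elaboration of that same argument.
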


\begin{exmp}

Let us now see how to unknot the $n$-twist-spun trivial knot on $3$ strands. We have

$\tau^n(\widehat{c_1^{-1}})=[a_2c_1^{-1}b_2c_1\Delta^{2n}]_3\overset{\text{A13)}}{=}[a_2c_1^{-1}b_2c_1]\overset{\text{C1)}}{=}[c_1a_2(c_1)^{-1}b_2]\overset{\text{A1)}}{=}[c_2^{-1}c^2c_1a_2c_1^{-1}b_2]\overset{\text{A3)}}{=}[c_2^{-1}a_1c_2c_1c_1^{-1}b_2]\overset{\text{A1)}}{=}[c_2^{-1}a_1c_2b_2]\overset{\text{C1)}}{=}[c_2b_2c_2^{-1}a_1]_3\overset{\text{C2)}}{=}[c_2b_2c_2^{-1}]_2\overset{\text{A2), A1)}}{=}[b_2]_2\overset{\text{C2)}}{=}[1]_1.$

\end{exmp}

\begin{exmp}

We now use this algebraic method to show proof of Theorem \ref{t01}(i).

Let $K=\widehat{R}$, then
\ \\
$\tau^{-n}(K)=\left[\left(\prod_{i=1}^{m}a_{2i}\right)R\left(\prod_{i=1}^{m}b_{2i}\right)R^{-1}\Delta_{2m+1}^{-2n}\right]\overset{\text{mirror reflection}}{=}\\=\left[\Delta_{2m+1}^{2n}R\left(\prod_{i=1}^{m}b_{2i}\right)R^{-1}\left(\prod_{i=1}^{m}a_{2i}\right)\right]^*\overset{\text{relation C1)}}{=}\\=\left[\left(\prod_{i=1}^{m}a_{2i}\right)\Delta_{2m+1}^{2n}R\left(\prod_{i=1}^{m}b_{2i}\right)R^{-1}\right]^*\overset{\text{relation A12)}}{=}\\=\left[\left(\prod_{i=1}^{m}a_{2i}\right)R\left(\prod_{i=1}^{m}b_{2i}\right)R^{-1}\Delta_{2m+1}^{2n}\right]^*=\tau^{n}(K)^*$.

\end{exmp}

\subsection{Index of a surface singular braid}

\begin{defn}
The \emph{singular braid index} of a surface-knot $F$, denoted by $Ind_S(F)$, is the minimum degree among all surface singular braids, that its closure gives a marked diagram of a surface equivalent to $F$. 
\end{defn}

It is easily seen that if $Ind_S(F)=1$ (i.e. $F=[1]_1$) then $F$ is the standard unknotted $2$-sphere $\mathbb{S}^2$. We will investigate this notion further.

\begin{thm}
If $Ind_S(F)=2$ then there are exactly six types of surface-knots $F$. Moreover, there exist infinitely many surface-knot types $F$ such that $Ind_S(F)=3$.
\end{thm}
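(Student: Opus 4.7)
The plan is to treat the two assertions separately, in each case producing an explicit list and then arguing completeness.

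For the \emph{index-two} part, I would first observe that on two strands the only applicable relations of Definition \ref{d01} are A1), A9), A10) and A11), since A2)-A8) and A12) all involve two distinct subscripts. Using these four I would reduce an arbitrary $w\in SSB_2$ to a word containing no $c_1c_1^{\pm1}$ cancellation, no repeated $a_1a_1$ or $b_1b_1$, and no subword of the form $a_1c_1^{\pm2}b_1$ or $b_1c_1^{\pm2}a_1$. I would then case-split on the number of markers. If $w=c_1^n$ has no marker, then $L_\pm([w]_2)=[c_1^n]_2$ is a torus link, trivial only for $n\in\{-1,0,1\}$; the cases $n=\pm1$ collapse under C2) to $Ind_S=1$, leaving the two-component unlink $[1]_2$. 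If $w$ has exactly one marker, C1) lets me assume $w=a_1c_1^n$ or $w=b_1c_1^n$, and triviality of the two smoothings, computed from Figure \ref{r6}, forces $n\in\{-1,0,1\}$; I obtain the trivially knotted projective planes $[c_1^{\pm1}a_1]_2$ and $[c_1^{\pm1}b_1]_2$. If $w$ has at least two markers, repeated use of A9)-A11) together with the $L_\pm$-triviality condition drives $w$ (up to C1)) to $a_1b_1$, producing the standard torus. I would finish by checking pairwise inequivalence of the resulting diagrams via $\chi(F)=r_++r_--v$, orientability, and the normal Euler number separating the two orientation types of trivially knotted projective planes; together with the trivial sphere, which stabilises into $SSB_2$ via C2), the list collapses to the claimed six types.

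For the \emph{index-three} part, I would take a braid $K$ on three strands whose plat closure is the torus knot $T(2,k)$ and apply Proposition \ref{p04} with $m=1$, $n=2$ to get
\[
\tau^2\bigl(T(2,k)\bigr) \;=\; \bigl[\,a_2\,K\,b_2\,K^{-1}\,\Delta_3^{4}\,\bigr]_3,
\]
so $Ind_S(\tau^2(T(2,k)))\le 3$. For the matching lower bound I would invoke Zeeman's theorem that $\tau^2$ of a classical knot is a fibered $2$-knot whose fibre is the punctured double branched cover of $S^3$ along the knot; for $T(2,k)$ this fibre is $L(k,1)\setminus\{\mathrm{pt}\}$, so the commutator subgroup of the knot group of $\tau^2(T(2,k))$ is $\pi_1(L(k,1))\cong\mathbb{Z}/k$. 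For distinct odd $k\ge 3$ this invariant separates the surfaces from one another, and also separates them from every one of the six surfaces of the first part (whose knot groups have commutator subgroups that are at most $\mathbb{Z}/2$). Infinitely many values of $k$ therefore yield infinitely many distinct knotted surfaces with $Ind_S=3$.

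The \textbf{main obstacle} I expect is the case analysis for words with two or more markers: the cyclic-permutation move C1) carries the side condition that the rotated word still lie in $CSB_n$, so one cannot freely cycle at will, and checking that the enumeration is genuinely exhaustive, rather than merely listing the obvious candidates, is where the work sits. The index-three argument is by contrast clean once Proposition \ref{p04} and Zeeman's fibration theorem are granted.
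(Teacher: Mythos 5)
Your index-two enumeration has a genuine gap in the case of two or more markers. You claim that such words are driven down to $a_1b_1$, but the word $a_1b_1c_1$ also lies in $CSB_2$ (both resolutions are unknots) and survives every relation you list: A11) removes crossings only in pairs ($a_ib_ic_i^2=a_ib_i$), so a single residual $c_1$ cannot be cancelled, and $[a_1b_1c_1]$ is the standard Klein bottle ($\chi=0$, non-orientable), distinct from the torus $[a_1b_1]$. Your final list therefore contains only five types (sphere, two-component unlink, the two projective planes, the torus), not the six claimed. Relatedly, your opening observation that only A1), A9), A10), A11) apply on two strands discards the tool the argument actually needs: as the relation A2) is stated in Definition \ref{d01}, the excluded index $k$ with $|k-i|=1$ lies outside $\{1,\dots,m-1\}$ when $m=2$, so A2) forces \emph{all} generators of $SSB_2$ to commute. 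That commutativity is what lets one put every word into the normal form $a_1^{\alpha}b_1^{\beta}c_1^{\gamma}c_1^{-\delta}$ with $\alpha,\beta\in\{0,1\}$ and then split on $\alpha\beta=1$ (where A11) and triviality of $L_\pm$ give exactly $[a_1b_1]$ and $[a_1b_1c_1]$) versus $\alpha\beta=0$ (where one resolution is a torus link $T(2,\delta-\gamma)$, forcing $|\delta-\gamma|\leq 1$). Without commutativity your proposed rewriting system is not obviously confluent and the exhaustiveness of your case split is unsubstantiated; you correctly flag the C1) side condition as delicate, but the real failure is earlier.

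The index-three half is sound and takes a different route from the paper for the separation step: the paper cites the quandle cocycle computations of Asami--Satoh to distinguish the surfaces $\tau^2(T(2,k))$, whereas you use Zeeman's fibration theorem to read off the commutator subgroup $\mathbb{Z}/k$ of the knot group from the punctured double branched cover $L(k,1)$. Your argument is more elementary and has the added virtue of explicitly separating these surfaces from the finitely many types of index at most two, a point the paper leaves implicit; either way the conclusion that infinitely many of them have index exactly three follows.
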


\begin{proof}
Let us consider elements of $CSB_2$. From the relation A2) it follows that all of them are commutative. So each surface is in the form $[a_1^{\alpha}b_1^{\beta}c_1^{\gamma}c_1^{-\delta}]_2$, for $\alpha, \beta, \gamma, \delta\geq 0$. By relations A8) and A9) it follows that all these surface-knots are in the form $[a_1^{\alpha}b_1^{\beta}c_1^{\gamma}c_1^{-\delta}]_2$, where $\alpha, \beta \in\{0,1\}$ and $\gamma, \delta\geq 0$.

If $\alpha\cdot\beta=1$ then by the relation A10) we have $\gamma, \delta \in\{0,1\}$ giving us two types of surfaces: $[a_1b_1]$ and $[a_1b_1c_1]$.
If $\alpha\cdot\beta=0$ then one of the resolutions $L_+$ or $L_-$ is a diagram of the torus link $T(2,\delta-\gamma)$. This classical link must be trivial by Theorem \ref{t05}, so we have that $|\delta-\gamma|\in\{0,1\}$ and using the relation A1) if needed, we have that $\gamma, \delta \in\{0,1\}$. This gives us four more types of surfaces: $[1]_2, [c_1], [a_1c_1], [a_1c_1^{-1}]$. 

All of those above mention six types of surface links are known to be mutually distinct.

As for the case $Ind_S(F)=3$, we can take a family of the surface knots $\tau^2(T(2,k))$ for odd prime $k$. They are mutually distinct (see paper \cite{AsaSat05}) and they are $3$-strand closure of the surface singular braid word $a_2c_1^{-k}b_2c_1^k\Delta^{4}$.

\end{proof}

\subsection{Minimal number of Reidemeister III moves}
\ \\
We now give a family of pairs of diagrams of classical links such that deforming one of them to the other requires minimum four Reidemeister III moves.

\begin{thm}

There exists a family of classical diagrams $D_{n,k}$ for $n\geq 2$ and odd $k\geq 3$ of $2$-component links with $6n+2k$ crossings such that there is at least four Reidemeister III moves required to transform the diagram $D_{n,k}$ into the trivial diagram without any crossings. 
\end{thm}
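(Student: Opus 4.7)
The plan is to translate the Reidemeister III count into a triple point count on a knotted surface and then to invoke the classification of three-braid twist-spun torus knots that was already used in the preceding theorem. Concretely, I would define $D_{n,k}:=L_+(W_{n,k})$ where $W_{n,k}=a_2c_1^{-k}b_2c_1^{k}\Delta_3^{2n}\in SSB_3$ is the braid word that represents $\tau^{n}(T(2,k))$ by Proposition \ref{p04}. By Theorem \ref{t05} this is a planar diagram of the trivial $2$-component link, and a direct count of generators gives $2k$ crossings from $c_1^{\pm k}$ together with $6n$ crossings from $\Delta_3^{2n}$, totalling $6n+2k$ as required.

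Suppose now that $D_{n,k}$ can be trivialized by a sequence of Reidemeister moves and planar isotopies using at most three Reidemeister III moves in total. I would interpret this sequence as a movie above $\mathbb{R}^3_0$, combine it with a fixed hyperbolic splitting below $\mathbb{R}^3_0$ obtained from an explicit untangling of $L_-(W_{n,k})$ that uses no Reidemeister III moves, and insert the saddle resolutions at $\mathbb{R}^3_0$ coming from the markers of $W_{n,k}$. By Theorem \ref{t04} this produces a closed knotted surface $F\subset\mathbb{R}^4$ of the same type as $\tau^{n}(T(2,k))$. By Proposition \ref{p01} the triple points of the generic projection of $F$ coincide with the Reidemeister III moves appearing in the two movies, so $F$ admits a projection with at most three triple points. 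This contradicts the lower bound of four on the triple point number of $\tau^{n}(T(2,k))$ for $n\geq 2$ and odd $k\geq 3$ coming from \cite{AsaSat05}.

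The main obstacle is the bookkeeping in the middle paragraph: one has to exhibit an explicit trivialization of $L_-(W_{n,k})$ that genuinely uses no Reidemeister III move, so that the upper bound on the triple points of $F$ really equals the number of Reidemeister III moves used above $\mathbb{R}^3_0$, and one has to verify that the cited triple point lower bound of four for $\tau^{n}(T(2,k))$ is available in the range $n\geq 2$, $k\geq 3$ odd (either directly from \cite{AsaSat05} or by a short companion argument using the monoid relations A9)--A12) to rule out representatives with fewer saddles and triple points). Once these two verifications are in place, the remainder of the argument is simply the dictionary between marked diagrams, surface singular braids, and triple points established earlier in the paper.
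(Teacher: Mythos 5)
Your overall strategy is the one the paper uses: realize $D_{n,k}$ as one resolution of the marked braid word $a_2c_1^{-k}b_2c_1^{k}\Delta_3^{2n}$ representing $\tau^{n}(T(2,k))$, untangle the other resolution with no Reidemeister III moves, and then convert a hypothetical trivialization of $D_{n,k}$ with at most three Reidemeister III moves into a projection of $\tau^{n}(T(2,k))$ with at most three triple points via Proposition \ref{p01} and Theorem \ref{t04}. (The paper takes $D_{n,k}=L_-(F)$ and untangles $L_+(F)$, which is the plat closure of $c_1^{-k}c_1^{k}\Delta^{2n}$: the $c_1^{-k}c_1^{k}$ cancels and each plat-closed $\Delta^2$ collapses by Reidemeister II moves; your swap of $L_+$ and $L_-$ is only a marker convention. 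The bookkeeping you defer is exactly this explicit reduction, and it does go through.)

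The one substantive gap is the source of the triple point lower bound, which is the engine of the whole argument. You attribute the bound $t(\tau^{n}(T(2,k)))\geq 4$ to \cite{AsaSat05}, but that paper is invoked in this article only to show the surfaces $\tau^2(T(2,k))$ are mutually distinct; it is not where the bound of four comes from. The paper obtains the bound by combining Cochran \cite{Coc83} (for $n\geq 2$ and $T(2,k)$ nontrivial, $\tau^{n}(T(2,k))$ is not a ribbon $2$-knot, hence its triple point number is positive) with Satoh \cite{Sat05b} (no $2$-knot has triple point number one, two, or three), which together force at least four triple points in every generic projection. Your proposed fallback, a ``short companion argument using the monoid relations A9)--A12),'' cannot supply this: relations in $SSB_m$ only certify that two words present equivalent surfaces, i.e.\ they yield upper bounds and identifications, never a lower bound on an invariant such as the triple point number. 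A lower bound requires an obstruction external to the monoid, and that is precisely the role of the Cochran--Satoh combination (or, alternatively, quandle cocycle computations). With that citation corrected, your argument coincides with the paper's proof.
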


\begin{proof}
Let us consider a diagram $D_{n,k}$ as a (modified) plat closure of word $c_1^k\Delta^{2n}c_1^{-k}$, presented in Figure \ref{r9}.

\begin{figure}[ht]
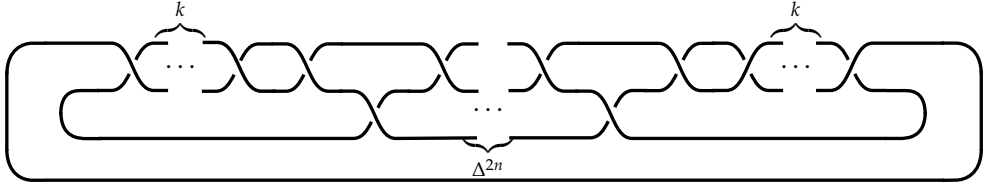

\begin{center}
\begin{lpic}[b(1cm),t(.3cm)]{./PICTURES/MJ_98(13cm)}
  \lbl[b]{37,30;$\overbrace{\text{}\quad\text{}}^{k}$}
	\lbl[b]{37,24;$\ldots$}
	\lbl[b]{167,30;$\overbrace{\text{}\quad\text{}}^{k}$}
	\lbl[b]{167,24;$\ldots$}
	\lbl[t]{102,9.7;$\underbrace{\text{}\quad\text{}}_{\Delta^{2n}}$}
	\lbl[b]{102,15;$\ldots$}
	\end{lpic}
	\caption{Diagram $D_{n, k}$\label{r9}}
\end{center}
\end{figure}

From Proposition \ref{p01} we know that every Reidemeister III move in a motion picture stills corresponds to one triple point in a surface diagram. By uniqueness of surface-knot type from a given marked diagram (Theorem \ref{t04}) it follows that, it is sufficient to prove that there exists a surface-knot $F$, such that $L_-(F)$ is the $D_{n,k}$ diagram and $L_+(F)$ can be unlinked without any Reidemeister III move; finally that the surface $F$ has at least four triple points in every projection to $\mathbb{R}^3$.

The latter one follows from combining theorems of Satoh from paper \cite{Sat05b} and Cochran from paper \cite{Coc83}, for $F$ being the $n$-twist-spun torus knot $T(2,k)$ for $n\geq 2$ and odd integer $k\geq 3$.

To prove the ability of unlink $L_+(F)$ with only using Reidemeister I-st or II-nd moves, we will proceed directly. The diagram $L_+(F)$ is at the beginning a (modified) plat closure of the braid word $c_1^{-k}c_1^{k}\Delta^{2n}$, after obvious reduction of the word $c_1^{-k}c_1^{k}$, we sequentially reduce every plat closure (from one side) of expression $\Delta^2$ as in Figure \ref{r22}. We receive at the end a diagram of two disjoint circles on the plane.

\begin{figure}[ht]
\begin{center}
\includegraphics[width=.95\textwidth]{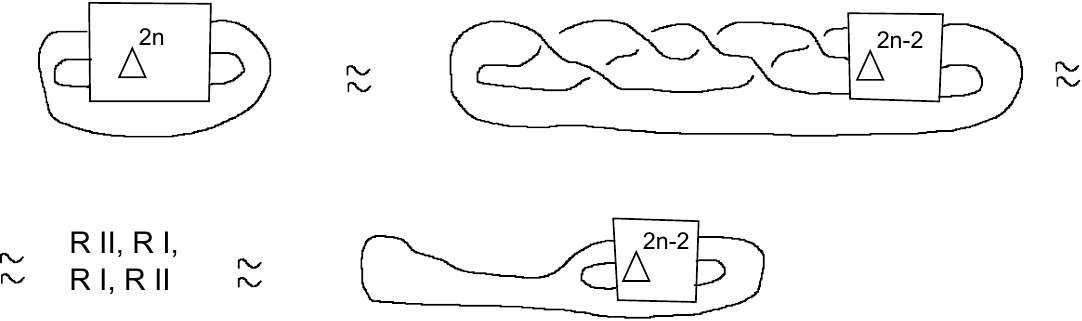}
\caption{\label{r22} part of reduction of $L_+(F)$}
\end{center}
\end{figure}

Finally we see that $L_-(F)$ is a (modified by planar isotopy) presented in Figure \ref{r9} diagram $D_{n,k}$, because we have that $\tau^n(T(2,k))=[a_2c_1^{-k}b_2c_1^{k}\Delta^{2n}]$.
\end{proof}

The definition of twist-spun torus knots raises an interesting question about duality of its parameters. For example $\tau^n(T(2,k))$ and $\tau^k(T(2,n))$ for $n=1$ or $k=1$ gives the same (unknotted) surface knot. But in the case $k, n$ are distinct odd integers greater than one, quandle cocycle invariants do not distinguish them (see paper \cite{AsaSat05}). In our algebraic language we state the following.

\begin{ques}
For what odd different integers $k, n>1$ we have $[a_2c_1^{-k}b_2c_1^{k}\Delta^{2n}]=[a_2c_1^{-n}b_2c_1^{n}\Delta^{2k}]?$
\end{ques}

\subsection*{Acknowledgments}

Research of M. Jab\l onowski was partially supported by grant BW 5107-5-0343-0. The author would like to thank the referee for carefully reading the paper. 

\end{document}